\newcommand{\bea}{\begin{eqnarray*}}
\newcommand{\eea}{\end{eqnarray*}}
\newcommand{\bm}{\begin{pmatrix}}
\newcommand{\fm}{\end{pmatrix}}
\newcommand{\bvm}{\begin{vmatrix}}
\newcommand{\fvm}{\end{vmatrix}}
\newcommand{\bbm}{\begin{bmatrix}}
\newcommand{\fbm}{\end{bmatrix}}
\newcommand\Z{\mathbb Z}
\newcommand\step{\textrm{step}}
\newcommand\V{\mathcal{V}}
\newcommand\W{\mathcal{W}}
\newcommand\m[1]{_{#1}}
\newcommand{\abs}[1]{\left | #1 \right |}
\DeclareMathOperator{\Ex}{\mathbb{E}}
\DeclareMathOperator{\Prob}{\mathbb{P}}
\DeclareMathOperator{\Var}{Var}
\begin{document}

\title{Random Nilpotent Groups of Maximal Step}

\author{ Phillip Harris}

\begin{abstract}
Let $G$ be a random torsion-free    nilpotent group generated by two random words of length $\ell$ in $U_n(\Z)$. Letting $\ell$ grow as a function of $n$, we analyze the step of $G$, which is bounded by the step of $U_n(\Z)$. We prove a conjecture of Delp, Dymarz, and Schafer-Cohen, that the threshold function for full step is $\ell = n^2$. 
\end{abstract}

\maketitle


A group $G$ is nilpotent if its lower central series 
\[
G = G_0 \geq G_1 \geq \dots \geq G_r = \{ 0 \}
\]
defined by $G_{i+1} = [G, G_i]$, eventually terminates. The first index $r$ for which $G_r = 0$ is called the \textit{step} of $G$. One may ask what a generic nilpotent group looks like, including its step. Questions about generic properties of groups can be answered with \textit{random groups}, first introduced by Gromov \cite{Ollivier}. Since Gromov's original \textit{few relators} and \textit{density} models are nilpotent with probability 0, they cannot tell us about generic properties of nilpotent groups. Thus there is a need for new random group models that are nilpotent by construction. 

Delp et al \cite{randnilp0} introduced a model for random nilpotent groups, motivated by the observation that any finitely generated torsion-free nilpotent group can be embedded in the group $U_n(\Z)$ of $n \times n$ upper triangular integer matrices with ones on the diagonal \cite{Hall}. Note that, since any finitely generated nilpotent group contains a torsion-free subgroup of finite index, we are not losing much by restricting our attention to torsion-free groups. (Another model is considered in \cite{randnilp1}).

We construct a random subgroup of $U_n(\Z)$ as follows. 
Let $E_{i,j}$ be the elementary matrix with 1's on the diagonal, a 1 at position $(i, j)$ and 0's elsewhere. 
Then $S = \{ E^{\pm 1}_{i,i+1} : 1 \leq i < n \}$
forms the standard generating set for $U_n(\Z)$. We call the entries at positions $(i,i+1)$ the \textit{superdiagonal} entries. Define a \textit{random walk} of length $\ell$ to be a product 
\[
V = V_1 V_2 \dots V_\ell 
\]
where each $V_i$ is chosen independently and uniformly from $S$. Let $V$ and $W$ be two independent random walks of length $\ell$. Then $G = \langle V, W \rangle$ is a random subgroup of $U_n(\Z)$. We have $\step(G) \leq \step(U_n(\Z))$, and it is not hard to check that $\step(U_n(\Z)) = n - 1$. If $\step(G) = n-1$ we say $G$ has \textit{full step}. 

Now let $n \to \infty$ and $\ell = \ell(n)$ grow as a function of $n$. We say a proposition $P$ holds \textit{asymptotically almost surely} (a.a.s.) if $\Prob [ P] \to 1$ as $n \to \infty$. Delp et al. gave results on the step of $G$, depending on the growth rate of $\ell$ with respect to $n$. 
\newtheorem{theorem}{Theorem}
\begin{theorem}[Delp-Dymarz-Schafer-Cohen]
Let $n, \ell(n) \to \infty$ and $G = \langle V , W \rangle$ where $V, W$ are independent random walks of length $\ell$. Then:
\begin{enumerate}
    \item If $\ell \in o(\sqrt{n})$ then a.a.s. $\step (G) = 1$, i.e. $G$ is abelian. 
    \item If $\ell \in o(n^2)$ then a.a.s. $\step (G) < n - 1$. 
    \item If $\ell \in \omega (n^3)$ then a.a.s. $\step (G) = n - 1$, i.e. $G$ has full step.
\end{enumerate}
\end{theorem}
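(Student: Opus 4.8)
\emph{Part (1).} Each step $V_i$ of the walk equals some $E_{k,k+1}^{\pm 1}$; let $A\subseteq\{1,\dots,n-1\}$ be the set of indices $k$ that occur in $V$, and let $B$ be the analogous set for $W$, so $|A|,|B|\le\ell$. Since $E_{k,k+1}$ and $E_{k',k'+1}$ commute whenever $|k-k'|\ge 2$, the matrices $V$ and $W$ commute as soon as no element of $A$ is within distance $1$ of an element of $B$. Conditioning on $V$ (hence on $A$) and revealing the $\ell$ steps of $W$ one at a time, each step violates this separation with probability at most $3|A|/(n-1)\le 3\ell/(n-1)$, so a union bound gives overall failure probability $O(\ell^2/n)=o(1)$ when $\ell=o(\sqrt n)$. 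Hence a.a.s. $G$ is abelian; since $\ell\to\infty$ one checks $V\ne I$ a.a.s. (a.a.s. some superdiagonal index is used exactly once, so $\bar V\ne 0$), giving $\step(G)=1$.

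\emph{Structural reduction, and Part (2).} Write $\bar V=(a_1,\dots,a_{n-1})$ for the superdiagonal of $V$, where $a_k$ is the net count of $E_{k,k+1}$-steps minus $E_{k,k+1}^{-1}$-steps, and similarly $\bar W=(b_1,\dots,b_{n-1})$. Since $G_{n-2}\le U_n(\Z)_{n-2}\cong\Z$ (the $(1,n)$ corner entry), $\step(G)=n-1$ iff some weight-$(n-1)$ iterated commutator of $V,W$ has nonzero corner. The key algebraic fact is that the top-degree (i.e. $(1,n)$) entry of a weight-$(n-1)$ group commutator in $U_n(\Z)$ equals the corresponding Lie bracket of the superdiagonal matrices $\sum_k c_k e_{k,k+1}$ — the standard identification of the graded Lie ring of the lower central series of $U_n(\Z)$ with strictly upper triangular matrices — which follows by induction on weight. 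Thus the corner of a weight-$(n-1)$ commutator is a signed sum of monomials $\pm\prod_{k=1}^{n-1}c_k$ with $c_k\in\{a_k,b_k\}$, each index used once; in particular, if $A\cup B\ne\{1,\dots,n-1\}$ then every such corner vanishes and $\step(G)<n-1$. So for Part (2) it suffices that a.a.s. some index is missing from $A\cup B$. Here $T_k^V\sim\mathrm{Bin}(\ell,1/(n-1))$ and, given $T_k^V=t$, $a_k$ is a sum of $t$ independent signs, so $\Prob[a_k=0]=\Ex\!\big[\mathbf{1}(T_k^V\text{ even})\binom{T_k^V}{T_k^V/2}2^{-T_k^V}\big]=\Theta\!\big(\min(1,\sqrt{(n-1)/\ell})\big)$ (using $\binom{t}{t/2}2^{-t}\asymp t^{-1/2}$ and Jensen). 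Hence the expected number of missing indices is $(n-1)\,\Prob[a_1=0]\,\Prob[b_1=0]=\Theta(\min(n,\,n^2/\ell))\to\infty$ exactly when $\ell=o(n^2)$, and a second-moment bound — the events $\{k\notin A\cup B\}$ are pairwise nonpositively correlated since the step-counts are multinomial and the signs are independent — gives a.a.s. existence of a missing index, hence a.a.s. $\step(G)<n-1$.

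\emph{Part (3).} Now we must produce a nonzero commutator. Take the left-normed commutator $[V,\underbrace{W,\dots,W}_{n-2}]\in G_{n-2}$; by the structural fact its corner equals that of $[\bar V,\bar W,\dots,\bar W]$, and a short computation with $\mathrm{ad}(\bar W)$ (each bracketing either grows the active interval to the left, contributing a factor $b$, or to the right, contributing another $b$) evaluates this to $\sum_{j=1}^{n-1}(-1)^{j+1}\binom{n-2}{j-1}\,a_j\!\!\prod_{k\ne j}b_k$ — a \emph{linear} form in $\bar V$ whose coefficients depend only on $\bar W$. When $\ell\in\omega(n^3)$ we have $\Prob[b_k=0]=O(\sqrt{(n-1)/\ell})=o(1/n)$, so a.a.s. every $b_k\ne 0$ and every coefficient $(-1)^{j+1}\binom{n-2}{j-1}\prod_{k\ne j}b_k$ is nonzero. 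Conditioning on $W$ and on the index-choices of $V$ (hence on the counts $T_k^V$), the $a_j$ become independent signed sums; fixing all of them except $a_{j_0}$, the corner vanishes only if $a_{j_0}$ hits a single prescribed value, which happens with probability $O(1/\sqrt{\max(T_{j_0}^V,1)})$, and taking expectations this is $O(\sqrt{(n-1)/\ell})=o(1)$. Hence a.a.s. this commutator has nonzero corner and $\step(G)=n-1$.

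\emph{Main obstacle.} The genuinely delicate step is Part (3): it requires both the exact corner formula for a well-chosen commutator and a quantitative anti-concentration estimate for $a_{j_0}$ conditional on the rest of the data, and because we can only afford to demand that \emph{every} $b_k$ be nonzero, this argument loses the sharp threshold and yields $\omega(n^3)$ rather than the conjectured $\omega(n^2)$ — closing that gap is the content of the rest of the paper.
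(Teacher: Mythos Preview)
This theorem is not proved in the present paper: it is quoted from Delp--Dymarz--Schaffer-Cohen \cite{randnilp0}, and the paper's own contribution (Theorem~\ref{thm:main}) is to sharpen part~(3) from $\omega(n^3)$ to $\omega(n^2)$. So there is no in-paper proof to compare your attempt against. Your sketch is a plausible reconstruction of the argument in \cite{randnilp0}, and your closing paragraph correctly isolates why the single commutator $[V,W,\dots,W]$ only reaches the $\omega(n^3)$ threshold --- needing \emph{every} $b_k\ne 0$ costs a union bound of size $n$, which is precisely the factor the present paper removes.

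There is, however, one concrete error in your Part~(2). You assert that the events $\{a_k=b_k=0\}$ are pairwise nonpositively correlated ``since the step-counts are multinomial and the signs are independent.'' This is false: writing $f(t)=\Prob[S_t=0]$ for $S_t$ a sum of $t$ independent $\pm1$'s, the function $f$ is not monotone (it vanishes at odd $t$), so negative association of the multinomial vector $(T_k^V)_k$ does not transfer to the indicators $f(T_k^V)$. Concretely, for $n=3$, $\ell=2$ one computes $\Ex[f(T_1^V)f(T_2^V)]=\tfrac14>\tfrac{9}{64}=\Ex[f(T_1^V)]\,\Ex[f(T_2^V)]$, a strictly positive correlation. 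The repair is exactly what the paper imports as Lemma~\ref{lem:prob} (Corollary~3.2 of \cite{randnilp0}): for $\ell=\omega(n)$ one has $\Prob[k,j\in\V]\sim n/(2\pi\ell)\sim\Prob[k\in\V]\,\Prob[j\in\V]$, i.e.\ the events are asymptotically uncorrelated, which is all the second-moment method needs. With that correction, Parts~(1)--(3) of your sketch are sound.
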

In this paper we close the gap between cases 2 and 3. 
\begin{theorem}\label{thm:main}
Let $n, \ell(n) \to \infty$ and $G = \langle V , W \rangle$. If $\ell \in \omega(n^2)$ then a.a.s. $G$ has full step. 
\end{theorem}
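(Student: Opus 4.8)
The plan is to pass to the associated graded Lie ring and reduce full step to the statement that a certain Lie subalgebra generated by two explicit random vectors reaches maximal nilpotency class. Write $\mathcal N = \bigoplus_{k=1}^{n-1}\mathcal N_k$ for the graded Lie ring of $U_n(\Z)$, where $\mathcal N_k \cong \Z^{n-k}$ is spanned by the elementary matrices $e_{i,i+k}$ on the $k$-th superdiagonal, with $[e_{i,i+a},e_{i+a,i+a+b}] = e_{i,i+a+b}$ and all other brackets of these generators zero; thus $\mathcal N_{n-1} = \Z e_{1,n}$ and $\mathcal N$ has nilpotency class $n-1$. Let $\overline v = (a_1,\dots,a_{n-1})$ and $\overline w = (b_1,\dots,b_{n-1})$ in $\mathcal N_1$ be the superdiagonal vectors of $V$ and $W$ (so $a_i$ is the signed count of occurrences of $E_{i,i+1}^{\pm 1}$ among $V_1,\dots,V_\ell$), and set $\mathcal L = \langle \overline v,\overline w\rangle \subseteq \mathcal N$, graded with $\mathcal L_1 = \Z\overline v + \Z\overline w$. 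Since $\gamma_n(U_n(\Z)) = 1$, we have full step $\iff \gamma_{n-1}(G)\neq 1 \iff \mathcal L_{n-1}\neq 0$; and because $\mathcal L_{k+1} = [\overline v,\mathcal L_k] + [\overline w,\mathcal L_k]$, this is in turn equivalent to $\mathcal L_k\neq 0$ for every $k\le n-1$. The key point is that $\mathrm{ad}_{\overline v}\colon \mathcal N_k\to\mathcal N_{k+1}$ is an explicit two-diagonal operator, $(\mathrm{ad}_{\overline v}\gamma)_m = a_m\gamma_{m+1} - a_{m+k}\gamma_m$, and similarly for $\mathrm{ad}_{\overline w}$; hence $\mathcal L_{k}=0$ can happen only if $\mathcal L_{k-1}$ lies in the common kernel $K^{(k-1)} := \ker\mathrm{ad}_{\overline v}\cap\ker\mathrm{ad}_{\overline w}$ inside $\mathcal N_{k-1}$, which is ``thin'' — one-dimensional unless many of the nearby $a_i$ or $b_i$ vanish.

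Next, the probabilistic input. View $\overline v,\overline w$ as the rows of a $2\times(n-1)$ integer matrix with columns $c_i = (a_i,b_i)$. Each $a_i$ is a simple random walk of random length $\sim\mathrm{Bin}(\ell,\tfrac1{n-1})$, so local central limit estimates give $\Prob[a_i=0] = \Theta(\sqrt{n/\ell})$ and $\Prob[c_i = 0] = \Theta(n/\ell)$; therefore a.a.s.\ \emph{every column $c_i$ is nonzero}, provided $\ell\in\omega(n^2)$. This is the sharp point: if $\ell\in o(n^2)$ then a.a.s.\ some column $c_{i_0}$ vanishes, so $\overline v,\overline w$ are supported off coordinate $i_0$, the algebra $\mathcal L$ splits along position $i_0$, and full step fails — this is the Delp--Dymarz--Schafer-Cohen lower bound, so $n^2$ is genuinely the threshold. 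The same anti-concentration estimates show that a.a.s.\ $\overline v,\overline w$ satisfy a short list of further genericity conditions (for instance: $\overline v\not\parallel\overline w$; no bounded-size ``degenerate pattern'' of vanishing or projectively aligned columns occurs — each such pattern has probability $(n/\ell)^{\ge 1}$ up to logarithms, and there are only $\mathrm{poly}(n)$ of them; and sufficiently many $2\times 2$ minors $a_ib_j-a_jb_i$ are nonzero), all of which hold with probability $1-o(1)$ when $\ell\in\omega(n^2)$.

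The deterministic heart of the proof is then: if every column is nonzero and the extra genericity holds, then $\mathcal L_k\neq 0$ for all $k$. One propagates a nonzero element upward. Above a bounded degree one maintains a nonzero $\gamma^{(k)}\in\mathcal L_k$ whose support is a short interval of coordinates; by the explicit formula for $\mathrm{ad}_{\overline v},\mathrm{ad}_{\overline w}$, a suitable integer combination $\lambda[\overline v,\gamma^{(k)}] + \mu[\overline w,\gamma^{(k)}]$ is again supported on a short interval and is nonzero as soon as one nearby $2\times 2$ minor is nonzero and the neighbouring columns are nonzero — and at each degree one is free to slide this interval one step toward coordinate $1$. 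Routing around the $o(n)$ ``bad'' coordinates, the interval reaches coordinate $1$ by degree $n-1$, producing a nonzero element of $\mathcal L_{n-1}=\Z e_{1,n}$. The low-degree range, where $\mathcal L_k$ (e.g.\ $\mathcal L_2 = \Z[\overline v,\overline w]$) need not be localized, is handled separately, using that $[\overline v,\overline w]$ has a nonzero coordinate near the last superdiagonal position and that $\dim\mathcal L_k$ grows quickly enough to force a localized vector after $O(\log n)$ steps. Putting the pieces together gives $\Prob[\text{not full step}]\le \Prob[\text{genericity fails}] = o(1)$.

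The main obstacle should be exactly this deterministic propagation, and in particular making it run on the hypothesis ``every column is nonzero'' rather than the much stronger ``every entry $a_i$ is nonzero'' — the latter needs $\ell\in\omega(n^3)$ and only re-proves the known upper bound. Since individual $a_i$ can vanish, the maps $\mathrm{ad}_{\overline v}$ suffer uncontrolled rank drops, so one must continually play $\overline v$ and $\overline w$ against each other and keep precise track of the support of the propagating element; and one must isolate exactly the finite list of genericity conditions that both (a) hold a.a.s.\ once $\ell\in\omega(n^2)$ and (b) already suffice to carry an element to the $(1,n)$ corner, including through the initial low-degree segment where no localized element is yet available.
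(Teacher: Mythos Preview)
Your reduction to the graded Lie ring and the identification of the sharp probabilistic input---that every column $(a_i,b_i)$ is nonzero a.a.s.\ once $\ell\in\omega(n^2)$---are correct and agree with the paper. But the deterministic half, which you yourself flag as ``the main obstacle'', is a genuine gap. You never specify which $2\times 2$ minors must be nonzero, never verify that those events hold a.a.s.\ at the $n^2$ threshold, and never actually prove the propagation step (that a short-support element of $\mathcal L_k$ can always be pushed to a nonzero short-support element of $\mathcal L_{k+1}$). The low-degree bootstrap---``$\dim\mathcal L_k$ grows quickly enough to force a localized vector after $O(\log n)$ steps''---is asserted with no argument, and it is unclear how to extract a short-support element from the single vector $[\bar v,\bar w]\in\mathcal L_2$ using only hypotheses available when $\ell\in\omega(n^2)$. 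As written this is a strategy, not a proof.

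The paper avoids step-by-step propagation entirely with a one-shot global construction. It expands a general $(n{-}1)$-fold nested commutator $C(x)_{1,n}$, $x\in\{0,1\}^{n-1}$, as a polynomial $\sum_{|y|=|x|}K(x,y)\prod_i a_i^{y_i}b_i^{1-y_i}$ in the superdiagonal entries, with integer coefficients $K(x,y)$ obeying an explicit recursion (Lemma~\ref{lem:master}). The trick is to choose $|x|$ equal to the number of \emph{nonzero} $a_i$: then every monomial except the one with $y_i=1\Leftrightarrow a_i\neq 0$ picks up a zero factor, and the whole sum collapses to the single term $K(x,v)\prod_{a_i\neq 0}a_i\prod_{a_i=0}b_i$. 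Since a.a.s.\ no index has $a_i=b_i=0$, the product is nonzero, and everything reduces to a purely combinatorial question about the coefficients: given the zero-pattern $v$ of $(a_i)$, exhibit some $x$ with $K(x,v)\neq 0$. This is done by hand (Lemma~\ref{lem:finish}) under two further mild conditions on $v$---no two adjacent zeros among the $a_i$, and the first and last blocks of nonzero $a_i$ have unequal lengths---both of which hold a.a.s.\ simply because $|\mathcal V|=o(\sqrt n)$ and the law of $\mathcal V$ is permutation-invariant. No minor conditions, no path-routing, no low-degree bootstrap.
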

To prove this requires a careful analysis of the nested commutators that generate $G_{n-1}$. In Section 1, we give a combinatorial criterion for a nested commutator of $V$'s and $W$'s to be nontrivial. In Section 2, we show this criterion is satisfied asymptotically almost surely when $V, W$ are random walks. 
\section{Nested Commutators}
Let $G = G_0 \geq G_1 \geq \dots$ be the lower central series of $G$. We have
\[
G_i = [G, G_{i-1}] = [G, [G, \dots, [G, G] \dots ]]
\]
In particular, $G_i$ includes all $i+1$-fold nested commutators of elements of $G$. We restrict our attention to commutators where each factor is $V$ or $W$. 

Let $\{ 0 , 1\}^d$ be the $d$-dimensional cube, or the set of all length $d$ binary vectors. For $x \in \{0,1\}^d, y \in \{0,1\}^{e}$ we define the norm $\abs{x} = \sum_{1 \leq i \leq d} x_i$ and the concatenation $xy \in \{0,1\}^{d + e}$. For example if $x = (1, 0, 0)$ and $y = (0, 1)$ then $xy = (1, 0, 0, 0, 1) = 10^31$.

We define a family of maps $C_d : \{ 0, 1 \}^d \to G_d$ as follows.
\begin{align}
C_1(1) &= V
\\
C_1(0) &= W
\\
C_{d}(1x) &= [V, C_{d-1}(x)]
\\
C_{d}(0x) &= [W, C_{d-1}(x)]
\end{align}
Thus for example $C_5(10^31) = C_5(10001) = [V, [W, [W, [W, V]]]]$. We omit the subscript $d$ when it is obvious. 
To prove $G$ has full step it suffices to find an $x \in \{0,1\}^{n-1}$ such that $C(x)$ is nontrivial. We begin with Lemma 2.3 from \cite{randnilp0}, which gives a recursive formula for the entries of a nested commutator. 
\newtheorem{lemma}{Lemma}
\begin{lemma}
Let $a \in \{0,1\}, x \in \{0,1\}^{d-1}$. Then $C(ax) \in G_d$ and we have
\begin{align}\label{eqn:Crec}
    C(ax)_{i,i+d}
    &=
    C(a)_{i,i+1}
    C(x)_{i+1,i+d}
    -
    C(a)_{i+d-1,i+d}
    C(x)_{i,i+d-1}
\end{align}
and furthermore $C(ax)_{i,j} = 0$ for $j < i+d$. 
\end{lemma}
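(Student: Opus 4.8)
The plan is to prove the recursion by induction on $d$, using the definition of $C$ together with standard commutator-matrix identities in $U_n(\Z)$. The key algebraic fact is the following: if $A, B \in U_n(\Z)$ and $B$ lies in $G_{d-1}$, meaning $B_{i,j} = 0$ for $j < i + (d-1)$, then the commutator $[A,B] = ABA^{-1}B^{-1}$ has $[A,B]_{i,j} = 0$ for $j < i+d$, and its first nonzero superdiagonal band satisfies
\begin{align*}
[A,B]_{i,i+d} = A_{i,i+1} B_{i+1,i+d} - A_{i+d-1,i+d} B_{i,i+d-1}.
\end{align*}
This is exactly the shape of \eqref{eqn:Crec} once we set $A = C(a)$ (so $A_{i,i+1}$ is $\pm 1$ or a superdiagonal entry of $V$ or $W$, while all higher bands of $A$ vanish) and $B = C(x)$.

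First I would establish the base case $d = 1$: here $x$ is empty, $C(a) \in G_1 = G$ by construction, and the claimed formula degenerates to $C(a)_{i,i+1} = C(a)_{i,i+1}$ (the second term is an entry $C(\text{empty})_{i,i}$ scaled appropriately, which should be read as the trivial convention), with $C(a)_{i,j} = 0$ for $j < i+1$ being automatic since $C(a) \in U_n(\Z)$. Then for the inductive step, write $ax = a(a'x')$ so that $C(ax) = [C(a), C(a'x')]$ with $C(a'x') \in G_{d-1}$ by the inductive hypothesis. I would then expand $[A,B]$ entrywise. The cleanest route is to first verify the vanishing claim: multiplying two upper-triangular matrices whose nonzero bands start at offsets $p$ and $q$ produces a matrix whose nonzero band starts at offset $\min$ of the relevant sums, and a careful bookkeeping of $ABA^{-1}B^{-1}$ with $A$ having band offset $\geq 1$ and $B$ having band offset $\geq d-1$ shows all entries below offset $d$ cancel. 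For the offset-$d$ band, only the ``extreme'' contributions survive: in $AB$ the entry at offset $d$ picks up $A_{i,i+1}B_{i+1,i+d}$ (using $A_{i,i} = 1$, $B_{i,i+d} = 0$), in $BA$ it picks up $B_{i,i+d-1}A_{i+d-1,i+d}$, and the inverse factors contribute $-1$'s on the diagonal at this order, yielding the difference above.

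The main obstacle I expect is the careful cancellation argument showing that all bands of $[A,B]$ strictly below offset $d$ vanish. Since $A$ itself may have nonzero entries in every superdiagonal band (it is a general element of $G$, or at least a nested commutator that could have high-order terms from earlier in the recursion), one cannot simply say ``low-order terms don't appear''; rather, one must check that the contributions from $ABA^{-1}B^{-1}$ in bands $1, 2, \dots, d-1$ cancel in pairs. A clean way to organize this is to pass to the associated graded Lie ring: modulo $G_d$, commutators behave additively/linearly, so $[A, B] \equiv 0 \pmod{G_d}$ whenever $B \in G_{d-1}$ simply because $[G, G_{d-1}] = G_d$. This immediately gives the vanishing for $j < i+d$ without any matrix bookkeeping, and reduces the problem to computing the single surviving band, which is the short computation sketched above. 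I would present the proof in that order: invoke $B \in G_{d-1} \Rightarrow [A,B] \in G_d$ for the vanishing, then do the one-line entrywise expansion of $[A,B]_{i,i+d}$ working modulo $G_{d+1}$.
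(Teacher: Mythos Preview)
The paper does not supply its own proof of this lemma; it is quoted as Lemma~2.3 from Delp--Dymarz--Schaffer-Cohen \cite{randnilp0}. So there is no in-paper argument to compare your attempt against.

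That said, your approach is essentially correct and is the standard one. The identity $[A,B]-I=(AB-BA)A^{-1}B^{-1}$, together with the fact that the product of two strictly upper-triangular matrices supported on bands $\geq p$ and $\geq q$ respectively is supported on bands $\geq p+q$, gives both the vanishing below band $d$ and the single-band formula cleanly, exactly as you outline at the end. Two small corrections. First, your parenthetical ``all higher bands of $A$ vanish'' is false: $C(a)$ equals $V$ or $W$, a full random walk in $U_n(\Z)$, not an elementary matrix, so it typically has nonzero entries on every superdiagonal band. Fortunately your later argument does not rely on this and works for arbitrary $A\in U_n(\Z)$. Second, the base case $d=1$ is ill-posed: $x$ is the empty string, $C(x)$ is undefined, and under the natural convention $C_0(\emptyset)=I$ the right-hand side of \eqref{eqn:Crec} becomes $C(a)_{i,i+1}\cdot 1 - C(a)_{i,i+1}\cdot 1 = 0$, not $C(a)_{i,i+1}$. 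Start the induction at $d=2$, where both $C(a)$ and $C(x)$ are genuine elements of $U_n(\Z)$ and your commutator computation applies directly.
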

In particular, for $d = n-1$ only the upper rightmost entry $C(ax)_{1,n}$ can be nonzero. From the formula it is clear that $C(ax)_{i,i+d}$ is a degree-$d$ polynomial in the superdiagonal entries of $V$ and $W$. Let us state this more precisely and analyze the coefficients of the polynomial. 

\begin{lemma}\label{lem:master}
Let $d \geq 1$. There exists a function $K_d : \{0,1\}^d \times \{0,1\}^d \to \Z$ such that for $1 \leq i \leq n - d$ we have
\begin{align}\label{eq:master}
    C(x)_{i, i+d} 
    &= 
    \sum_{
    \substack{y \in \{0,1\}^d \\ \abs{y} = \abs{x}}
    }
    K_d(x,y) \prod_{i \leq j < i + d} V_{j,j+1}^{y_j} W_{j,j+1}^{1- y_j}
\end{align}
Furthermore, setting $K_d(x,y) = 0$ for $\abs{x} \neq \abs{y}$ we have a recursion
\begin{align}\label{eq:krec}
    K_d(ax, byc) = K_1(a,b)K_{d-1}(x,yc) - K_1(a,c)K_{d-1}(x,by)
\end{align}
with base cases
\begin{align*}
    K_1(0,0) &= K_1(1,1) = 1
    \\
    K_1(0,1) &= K_1(1,0) = 0
\end{align*}
\end{lemma}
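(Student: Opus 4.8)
The plan is to prove equation~\eqref{eq:master} and the recursion~\eqref{eq:krec} simultaneously by induction on $d$, building on Lemma~1 (the recursive formula for commutator entries). The base case $d=1$ is immediate: from the definitions $C_1(1) = V$ and $C_1(0) = W$, so $C(x)_{i,i+1} = V_{i,i+1}$ when $x=1$ and $=W_{i,i+1}$ when $x=0$, which is exactly $\sum_{\abs{y}=\abs{x}} K_1(x,y) V_{i,i+1}^{y}W_{i,i+1}^{1-y}$ with the stated base-case values of $K_1$. So the content is entirely in the inductive step.

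For the inductive step, fix $d\geq 2$ and write $x = ax'$ with $a\in\{0,1\}$, $x'\in\{0,1\}^{d-1}$. Applying Lemma~1 gives
\begin{align*}
C(ax')_{i,i+d} = C(a)_{i,i+1}\,C(x')_{i+1,i+d} - C(a)_{i+d-1,i+d}\,C(x')_{i,i+d-1}.
\end{align*}
Now I would substitute the inductive hypothesis for each of the three factors: $C(a)_{\cdot,\cdot}$ via the $d=1$ case, and $C(x')_{i+1,i+d}$, $C(x')_{i,i+d-1}$ via the $(d-1)$-case of~\eqref{eq:master}. The first product becomes a sum over $y'\in\{0,1\}^{d-1}$ with $\abs{y'}=\abs{x'}$ of $K_1(a,b)K_{d-1}(x',y')$ times a monomial in the superdiagonal entries at positions $i$ through $i+d-1$, where $b$ indexes whether position $i$ contributes a $V$ or a $W$; the key bookkeeping point is that the $C(a)$ factor supplies the letter at superdiagonal slot $i$ and the $C(x')$ factor supplies slots $i+1,\dots,i+d-1$, so the combined monomial is indexed by the concatenation $by'$. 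Symmetrically, the second product is indexed by concatenations $y''c$ with the $C(a)$ factor supplying slot $i+d-1$. Collecting the coefficient of a fixed monomial indexed by $y\in\{0,1\}^d$ — which I would split as $y = byc$ no wait, I need to be careful: writing $y = b\,\tilde y\,c$ with $b,c\in\{0,1\}$ and $\tilde y\in\{0,1\}^{d-2}$, the first product contributes when $y' = \tilde y c$ and the second when $y'' = b\tilde y$ — yields exactly $K_1(a,b)K_{d-1}(x',\tilde y c) - K_1(a,c)K_{d-1}(x',b\tilde y)$, which is the claimed recursion~\eqref{eq:krec}. Defining $K_d$ by this formula then makes~\eqref{eq:master} hold at level $d$, and the degree/support claim ($K_d(x,y)=0$ unless $\abs{x}=\abs{y}$) propagates because $K_1(a,b)=0$ unless $a=b$, so each surviving term preserves the total number of $1$'s.

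The main obstacle is purely notational rather than mathematical: one must carefully track how the index sets $\{0,1\}^{d-1}$ for the two sub-commutators embed into $\{0,1\}^d$ when multiplied by the length-one factor $C(a)$, and in particular verify that the endpoint letter supplied by $C(a)$ lands in superdiagonal slot $i$ in the first term but slot $i+d-1$ in the second term — this asymmetry is precisely what produces the asymmetric roles of $b$ and $c$ in~\eqref{eq:krec}. A secondary point requiring a line of justification is that the monomials $\prod_{i\leq j<i+d} V_{j,j+1}^{y_j}W_{j,j+1}^{1-y_j}$ for distinct $y$ are not literally linearly independent as integers, so~\eqref{eq:master} is an identity of polynomials in formal variables $V_{j,j+1}, W_{j,j+1}$ (equivalently, it holds for the generic matrices), which is how one legitimately "reads off" coefficients; I would state this once at the outset. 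Everything else is a routine matching of terms.
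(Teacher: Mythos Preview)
Your proposal is correct and is essentially identical to the paper's own proof: both induct on $d$ using Lemma~1, expand each of the two terms via the inductive hypothesis, reindex the resulting sums by writing $y = b\tilde y c$, and read off the recursion~\eqref{eq:krec} as the coefficient of the monomial $U(i,d,b\tilde y c)$, with the $\abs{x}=\abs{y}$ constraint then following immediately by induction from $K_1(a,b)=\delta_{a,b}$. Your added remark about treating the superdiagonal entries as formal variables is a reasonable clarification, though strictly speaking it is not needed here since one is \emph{defining} $K_d$ by the recursion and verifying the sum identity, rather than extracting $K_d$ from a linear-independence argument.
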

Note that $K_d(x,y)$ does not depend on $i$. We also drop the subscript $d$ since it can be inferred from $x$ and $y$. 
\begin{proof}
Abbreviate
\begin{align*}
    U(i,d,y) := \prod_{i \leq j < i+d} V_{j,j+1}^{y_j} W_{j,j+1}^{1 - y_j}
\end{align*}
We first prove inductively that there exist coefficients $K_d : \{0,1\}^d \times \{0,1\}^d \to \Z$ such that 
\begin{align*}
    C(x)_{i,i+d} 
    &=
    \sum_{y \in \{0,1\}^d}
    K_d(x,y)
    U(i,d,y)
\end{align*}
The case $d = 1$ is trivial. 
Assume it holds for $d-1$. Let $a \in \{0,1\}, x \in \{0,1\}^{d-1}$, then we have
\begin{align*}
    C(ax)_{i,i+d}
    &=
    C(a)_{i,i+1}C(x)_{i+1,i+d} - C(a)_{i+d-1,i+d}C(x)_{i,i+d-1}
\textbf{}\end{align*}

Expanding $C(a)_{i,i+1}$ and $C(x)_{i+1,i+d}$, the first term is 
    
\begin{align*}
    &=
    \left[ K_1(a,1)V_{i,i+1} + K_1(a,0)W_{i,i+1} \right]
    \left[ 
    \sum_{y \in \{0,1\}^{d-1}}
    K_{d-1}(x,y)
    U(i+1,d-1,y)
    \right]
    \\
    &=
    \sum_{y \in \{0,1\}^{d-1}}
    K_1(a,1)K_{d-1}(x,y) 
    U(i,d,1y)
    +
    K_1(a,0)K_{d-1}(x,y) 
    U(i,d,0y)
    \\
    &= 
    \sum_{\substack{
    b, c \in \{0, 1\}
    \\
    y' \in \{0, 1 \}^{d-2}
    }
    }
    K_1(a, b) K_{d-1}(x, y'c) 
    U(i,d,by'c)
\end{align*}
Similarly the second term is 
    \begin{align*}
    &= 
    \sum_{\substack{
    b, c \in \{0, 1\}
    \\
    y' \in \{0, 1 \}^{d-2}
    }
    }
    K_1(a, c) K_{d-1}(x, by')
    U(i,d,by'c)
\end{align*}
Combining we get
\begin{align*}
    C(ax)_{i,i+d}
    &=
    \sum_{
    \substack{
    b, c \in \{0, 1\}
    \\
    y \in \{0, 1\}^{d-2}
    }
    }
    \left[ 
    K_1(a,b)K_{d-1}(x,yc) 
    - 
    K_1(a,c)K_{d-1}(x,by)
    \right] 
    U(i,d,byc)
\end{align*}
And setting $K_d(ax,byc) = K_1(a,b)K_{d-1}(x,yc) - K_1(a,c)K_{d-1}(x,by)$ the lemma is proved for $d$. 
It is also easy to see inductively that $K_d(x,y) = 0$ for $\abs{x} \neq \abs{y}$, so we may add the condition $\abs{x} = \abs{y}$ under the sum to get Equation $\ref{eq:master}$.
\end{proof}
We now have a strategy for choosing $x \in \{0, 1\}^{n-1}$ such that $C(x)$ is nontrivial. In the random model, it may happen that $V_{i, i+1} = 0$ for some $i$. Define the vector $v \in \{0, 1\}^{n-1}$ by $v_i = 1$ if $V_{i,i+1} \neq 0$ and $v_i = 0$ otherwise. For now assume $0 < \abs{v} < n-1$. If we choose $x$ such that $\abs{x} = \abs{v}$, then Equation \ref{eq:master} simplifies to
\begin{equation}
    C_{n-1}(x)_{1, n} 
    =
    K_d(x,v)
    \prod_{1 \leq i < n} V_{i,i+1}^{v_i} W_{i,i+1}^{1 - v_i}
\end{equation}
If we assume there is no $i$ such that $V_{i,i+1} = W_{i,i+1} = 0$, the product of matrix entries is nonzero. So we just need to choose $x$ such that $K_d(x,v) \neq 0$. 
We can do this with some additional conditions on $v$. 
\begin{lemma}\label{lem:finish}
Let $v \in \{ 0, 1 \}^{n-1}$ with $0 < \abs{v} < n - 1$. Write $v = 1^{a_1} 0 1^{a_2} \dots 1^{a_k-1} 0 1^{a_k}$. Assume that $a_i \geq 1$ for all $i$, i.e., there are no adjacent 0's, and that $a_1 \neq a_k$. Then there exists $x \in \{ 0, 1\}^{n-1}$ such that $K(x,v) \neq 0$. 
\end{lemma}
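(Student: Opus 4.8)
The plan is to turn the recursion (\ref{eq:krec}) into an explicit combinatorial formula for $K$ and then to hand-build a suitable $x$. Iterating (\ref{eq:krec}) — peeling one letter off the first argument and one letter off either end of the second at each step — one finds that $K(x,v)$ is a signed count of \emph{reductions}: ways to delete the letters of $v$ one at a time, where at the $t$-th step we delete the first or the last remaining letter of $v$, that letter must equal $x_t$, and a deletion from the right is weighted by $-1$ (the last deletion, when a single letter of $v$ survives, is unweighted). Grouping reductions by the position $p$ of the last letter of $v$ to be deleted gives
\begin{equation}\label{eq:shuffle}
K(x,v)\;=\;\sum_{\substack{1\le p\le d\\ v_p=x_d}}(-1)^{d-p}\,\mathrm{Sh}\bigl(v_{[1,p-1]},\,\overline{v_{[p+1,d]}};\,x_{[1,d-1]}\bigr),
\end{equation}
where $v_{[i,j]}=v_i\cdots v_j$, $\overline{w}$ is the reversal of $w$, and $\mathrm{Sh}(A,B;C)$ is the number of interleavings (shuffles) of $A$ and $B$ equal to $C$. (In particular $K$ vanishes unless $|x|=|v|$, recovering the support condition in Lemma~\ref{lem:master}.) A bookkeeping check on (\ref{eq:shuffle}) — matching the summand at $p$ for $v$ with the one at $d+1-p$ for $\overline v$ and using that $\mathrm{Sh}$ is symmetric in its first two arguments — also yields $K(x,\overline v)=(-1)^{d+1}K(x,v)$, so by replacing $v$ with $\overline v$ if necessary we may assume throughout that $a_1<a_k$.

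The strategy is then to choose $x$ so that at most one summand in (\ref{eq:shuffle}) survives, and that one is positive. The key move is a clean \emph{peeling} identity: when $a_1<a_k$, prepending the block $1^{a_1}0$ to any $x''$ gives $K(1^{a_1}0\,x'',v)=K(x'',v')$ with $v'=1^{a_2}01^{a_3}0\cdots 01^{a_k}$. Indeed, peeling the $a_1$ leading $1$'s of $x$ via (\ref{eq:krec}) writes $K(1^{a_1}0\,x'',v)$ as a signed sum over the words obtained from $v$ by deleting $i$ ones from its left and $a_1-i$ from its right; since $a_1<a_k$ each such word still ends in a $1$ unless $i=a_1$, so after peeling the next letter $0$ off $x$ only the $i=a_1$ term survives, and a short computation gives its coefficient as $+1$. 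A mirror statement (carrying a harmless sign) peels the last block of $v$ when $a_k<a_1$. Iterating — at each stage deleting whichever end block of the current word is strictly smaller and recording it in $x$ — and finishing, once two blocks $1^{b_1}01^{b_2}$ remain, with the tail $1^{b_1+b_2}0$, for which (\ref{eq:shuffle}) evaluates to $(-1)^{b_2}\binom{b_1+b_2}{b_1}\ne 0$, would produce an $x$ with $K(x,v)=\pm(-1)^{b_2}\binom{b_1+b_2}{b_1}\ne 0$.

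The main obstacle is that this peeling can \emph{stall}: deleting the smaller end block needs the two end blocks to be unequal, and a word $1^{c}0\cdots 01^{c}$ with equal end blocks can arise partway through even though $a_1\ne a_k$ (for instance, peeling $v=1\,0\,1^2\,0\,1^2\,0\,1^2$ strands us at $1^2\,0\,1^2\,0\,1^2$). For such symmetric words (\ref{eq:shuffle}) can vanish identically — this is precisely what the hypothesis $a_1\ne a_k$ is there to block for $v$ itself, since the swap-symmetry of $\mathrm{Sh}$ together with the reversal identity forces $K(\,\cdot\,,v)\equiv 0$ when $v$ is a palindrome with $a_1=a_k$ and $d$ even. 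So the real content of the lemma is to show that, starting from $a_1\ne a_k$, one can always either schedule the block deletions to reach a two-block word, or else, when the peeling is doomed to stall, exhibit directly an $x$ — typically with a leading block longer than the naive $1^{a_1}$ — for which a single term of (\ref{eq:shuffle}) is a nonzero binomial coefficient. I expect this case analysis, essentially a classification of which run-profiles are realizable as shuffles $\mathrm{Sh}\bigl(v_{[1,z_j-1]},\overline{v_{[z_j+1,d]}};\,\cdot\,\bigr)$ for the various internal zeros $z_j$ of $v$, together with the use of $a_1\ne a_k$ to keep these profiles from overlapping in a way that annihilates every term, to be the bulk of the argument.
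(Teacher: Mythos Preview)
Your proposal is not a complete proof: you explicitly leave the main difficulty --- the stalling of the one-sided peel --- as an ``expected'' case analysis that you do not carry out. Everything up to that point is sound: the shuffle formula \eqref{eq:shuffle} is a correct unwinding of the recursion \eqref{eq:krec}, the reversal identity $K(x,\overline v)=(-1)^{d+1}K(x,v)$ follows from it, and your peeling identity $K(1^{a_1}0\,x'',v)=K(x'',v')$ when $a_1<a_k$ is exactly the paper's identity~(3). But the gap is real, not cosmetic: your own example $v=1\,0\,1^2\,0\,1^2\,0\,1^2$ shows that a single application of identity~(3) can land you on a word with equal end blocks, and from there one-sided peeling yields nothing. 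A case analysis on run profiles is not obviously finite or tractable; in particular there is no a priori reason the ``single nonzero binomial'' term you hope to isolate actually exists for every stalled configuration.

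The paper sidesteps this entirely with one additional identity you are missing:
\[
K\bigl(1^{a+b}0^{2}x,\;1^{a}0\,1y1\,0\,1^{b}\bigr)\;=\;2\binom{a+b}{a}(-1)^{b}\,K(x,\,1y1),
\]
valid for all $a,b\ge 0$, with no inequality between $a$ and $b$ required. This peels off \emph{both} outer blocks $1^{a_1}$ and $1^{a_k}$ simultaneously, passing from $k$ blocks to $k-2$ while preserving the ``no adjacent zeros'' condition and never stalling. For even $k$ one iterates this down to two blocks and finishes with $K(1^{a+b}0,1^{a}01^{b})=(-1)^{b}\binom{a+b}{a}$ (your base case). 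For odd $k$ one first applies identity~(3) exactly once --- and this is the \emph{only} place the hypothesis $a_1\ne a_k$ is used --- to drop to an even number of blocks, then proceeds as before. The resulting $x$ is completely explicit.

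So the fix is not a case analysis but a single stronger recursion: prove the two-sided peel by expanding the leading $1^{a+b}0^{2}$ via \eqref{eq:krec} (your shuffle picture makes this transparent --- the $a+b$ ones must be split between the two outer blocks, the first $0$ then kills all splits but the correct one, and the second $0$ contributes the factor $2$), and the lemma follows in a few lines.
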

We will prove in section 2 that all assumptions used hold asymptotically almost surely. 

\begin{proof}
Using Equation \ref{eq:krec}, the following identities are easily verified by induction:
\begin{enumerate}
    \item If $a, b \geq 0$, then \[ K(1^{a+b}0,1^a01^b) = {a + b \choose a}(-1)^b
    \]
    \item If $a, b \geq 1, c \geq 0$ with $c < \min (a,b)$, then
    \[ 
    K(1^c0x, 1^a y 1^b) = 0
    \]
    \item Let $a, b \geq 0$. If $a < b$ then 
    \[
    K(1^a0x,1^a0y1^b) = K(x,y1^b)
    \] If $b < a$ then 
    \[ 
    K(1^a0x,1^ay01^b) = K(x, 1^ay)
    \]
    \item If $a, b \geq 0$ then 
    \[
    K(1^{a+b} 0^2 x, 1^a01y101^b) = 2{a + b \choose a} (-1)^b K(x, 1y1)
    \]
\end{enumerate}
Let $v = 1^{a_1} 0 1^{a_2} \dots 0 1^{a_k}$. First assume $k = 2 \ell$ is even. Applying identity 4 repeatedly we reduce to the case $v = 1^{a_\ell} 0 1^{a_{\ell + 1}}$, then apply identity 1. Explicitly we have
\begin{align}
    x 
    &=
    1^{a_1 + a_{2 \ell}} 0^2 1^{a_2 + a_{2\ell - 1}} 0^2 \dots 1^{a_\ell + a_{\ell + 1}}0
    \\
    K(x,v) 
    &=
    2^\ell
    (-1)^{a_{2 \ell} + a_{2 \ell-1} + \dots + a_{\ell + 1}} 
    {a_1 + a_{2 \ell+1} \choose a_1}
    {a_2 + a_{2 \ell} \choose a_2}
    \dots 
    {a_\ell + a_{\ell + 1} \choose a_\ell}
\end{align}
If $k$ is odd, apply identity 3 once and proceed as before. 
\end{proof}

\section{Asymptotics}
In Section 1 we derived a combinatorial condition on the superdiagonal entries of $V$ and $W$ sufficient for $G$ to have full step. Define
\begin{align*}
    \mathcal{V} &= \{ i : 1 \leq i < n, V_{i,i+1} = 0 \}
    \\
    \mathcal{W} &= \{ i : 1 \leq i < n, W_{i,i+1} = 0 \}
\end{align*}
Then to apply Lemma \ref{lem:finish} we need that
\begin{enumerate}
    \item $\V$ and $\W$ are nonempty.
    \item $\V \cap \W = \emptyset$.
    \item $\V$ has no adjacent elements.
    \item $\min \V \neq n - \max \V$.
\end{enumerate}
If condition (1) does not hold, then Theorem \ref{thm:main} follows by a modification of Lemma 5.4 in \cite{randnilp0}. 
We now show that in the random model, the superdiagonal entries satisfy conditions (2)-(4) asymptotically almost surely. Recall that $V$ and $W$ are random walks
\begin{align*}
    V &= V_1 V_2 \dots V_\ell
    \\
    W &= W_1 W_2 \dots W_\ell
\end{align*}
where each $V_i, W_i$ is chosen independently and uniformly from the generating set
$S = \{ E^{\pm 1}_{i,i+1} : 1 \leq i < n \}$
.
Define
\begin{align}
    \sigma_j(Z) = \begin{cases}
    1 & \text{ if $Z = E_{j,j+1}$}
    \\
    -1 & \text{ if $Z = E_{j,j+1}^{-1}$}
    \\
    0 & \text{ otherwise}
    \end{cases}
\end{align}
Then we have
\begin{equation}\label{eq:sigma}
    V\m{i,i+1} = \sum_{j=1}^\ell \sigma_i(V_j)
\end{equation}
When $\ell \gg n$, the superdiagonal entries $V_{i,i+1}$ behave roughly like independent random walks on $\Z$. 
We restate Corollary 3.2 from \cite{randnilp0}. 
\begin{lemma}\label{lem:prob}
Suppose $\ell = \omega(n)$. Then uniformly for $(k_1, \dots, k_d) \in \Z^d$ we have
\[
\Prob [ k_i \in \V \text{ for all $i$}] 
\sim 
\left(\frac{n}{2 \pi \ell}\right)^{d/2}
\]
\end{lemma}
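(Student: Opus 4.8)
The plan is to estimate the probability that $V_{i,i+1} = 0$ via the one-dimensional local central limit theorem, and then to combine the coordinates. Fix $(k_1,\dots,k_d)$. Recall from Equation \eqref{eq:sigma} that $V_{k_m,k_m+1} = \sum_{j=1}^\ell \sigma_{k_m}(V_j)$. Each step $V_j$ is uniform on the $2(n-1)$ generators $S$; it contributes $\pm 1$ to coordinate $k_m$ with probability $\tfrac{1}{2(n-1)}$ each, and $0$ otherwise. Thus the joint law of $(V_{k_1,k_1+1},\dots,V_{k_d,k_d+1})$ is that of a sum of $\ell$ i.i.d.\ vectors in $\Z^d$, each of which is $0$ with probability $1 - \tfrac{d}{n-1}$ and otherwise picks a uniformly random coordinate among $k_1,\dots,k_d$ and sets it to $\pm 1$.

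First I would compute the characteristic function of a single step: for $\theta \in \R^d$,
\[
\phi(\theta) = 1 - \frac{1}{n-1}\sum_{m=1}^d (1 - \cos\theta_m),
\]
so the characteristic function of the sum is $\phi(\theta)^\ell$. Then I would apply the inversion formula,
\[
\Prob[V_{k_m,k_m+1}=0 \ \forall m] = \frac{1}{(2\pi)^d}\int_{[-\pi,\pi]^d} \phi(\theta)^\ell \, d\theta.
\]
The bulk of the mass comes from a neighborhood of $\theta = 0$ of size $\sim (n/\ell)^{1/2}$ in each coordinate; there $1 - \cos\theta_m \approx \theta_m^2/2$, so $\phi(\theta)^\ell \approx \exp\!\big(-\tfrac{\ell}{2(n-1)}|\theta|^2\big)$. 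Rescaling $\theta = \sqrt{(n-1)/\ell}\,\xi$ turns the integral into $\big(\tfrac{n-1}{\ell}\big)^{d/2}$ times a $d$-dimensional Gaussian integral tending to $(2\pi)^{d/2}$, which gives the claimed asymptotic $\big(\tfrac{n}{2\pi\ell}\big)^{d/2}$ (using $n-1 \sim n$). To make this rigorous I would split the domain: on $|\theta|\le \delta$ use the quadratic Taylor expansion of $\log\phi$ with uniform control of the error, extending the rescaled Gaussian integral to all of $\R^d$ at negligible cost since $\ell/n\to\infty$; on $\delta \le |\theta|$ (with $\theta$ away from $0$ coordinatewise) bound $|\phi(\theta)| \le 1 - c\delta^2/n$, so $|\phi(\theta)|^\ell \le \exp(-c\delta^2 \ell/n)$, which is super-polynomially small compared to $(n/\ell)^{d/2}$ because $\ell/n \to \infty$.

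The main obstacle is establishing uniformity over all $(k_1,\dots,k_d) \in \Z^d$, in particular handling the degenerate cases where the $k_m$ are not distinct or some $k_m$ lies outside $\{1,\dots,n-1\}$. If $k_m \notin \{1,\dots,n-1\}$ then $\sigma_{k_m} \equiv 0$, so $V_{k_m,k_m+1} = 0$ deterministically and that coordinate should simply be dropped — but then the event has probability asymptotic to $(n/2\pi\ell)^{d'/2}$ with $d' < d$, which is larger, not equal, to the stated $(n/2\pi\ell)^{d/2}$; presumably the intended reading of the lemma restricts to distinct $k_m$ in range, or the statement is an upper bound in the degenerate cases, and I would state this hypothesis explicitly. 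For distinct in-range $k_m$, the single-step covariance matrix is $\tfrac{1}{n-1}I_d$ exactly, so no anisotropy arises and the Gaussian computation above goes through with error terms uniform in the choice of indices (the Taylor remainder bounds depend only on $d$ and on $\ell/n$, not on which coordinates were selected). Collecting the two regimes yields the asymptotic, uniformly, completing the proof.
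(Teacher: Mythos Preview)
The paper does not prove this lemma at all: the sentence immediately preceding it says ``We restate Corollary~3.2 from \cite{randnilp0},'' and no argument follows. So there is nothing in the present paper to compare your proof against.

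Your Fourier-inversion route is correct and is the standard way to obtain such a local limit statement; the single-step characteristic function you wrote down is right, the Gaussian contribution near $\theta=0$ gives exactly $(n/2\pi\ell)^{d/2}$, and your tail bound $|\phi(\theta)|\le 1-c\delta^2/n$ on $[-\pi,\pi]^d\setminus B_\delta$ is valid since $\phi$ is real, nonnegative for large $n$, and at least one $1-\cos\theta_m$ is bounded below. The only quibble is your reading of the degenerate case: by the definition $\V=\{i:1\le i<n,\ V_{i,i+1}=0\}$, an out-of-range $k_m$ makes the event $\{k_m\in\V\}$ \emph{empty}, so the probability is $0$, not $1$. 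Either way your conclusion stands that the quantifier ``uniformly for $(k_1,\dots,k_d)\in\Z^d$'' cannot be taken literally; the paper only ever invokes the lemma for distinct indices in $\{1,\dots,n-1\}$, and there your argument goes through with error terms depending only on $d$ and $\ell/n$.
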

Since $\V$ and $\W$ are i.d.d, we have $\Prob [ i \in \V \cap \W ] \ll n / \ell$, so by the union bound we have $\Prob [ \V \cap \W \neq \emptyset] \ll n^2/\ell \to 0$. Thus condition (2) holds a.a.s. For conditions (3) and (4) we will need a bound on the size of $\V$.

\begin{lemma}
Fix $\epsilon > 0$. Then $\Prob [ \abs{\V} > \epsilon \sqrt{n} ] \to 0$ as $n \to \infty$. 
\end{lemma}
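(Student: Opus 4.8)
The plan is to bound $\Ex[\abs{\V}]$ and apply Markov's inequality. By linearity of expectation, $\Ex[\abs{\V}] = \sum_{i=1}^{n-1} \Prob[i \in \V]$. By Lemma \ref{lem:prob} applied with $d = 1$, we have $\Prob[i \in \V] \sim \sqrt{n/(2\pi\ell)}$ uniformly in $i$, so $\Ex[\abs{\V}] \sim (n-1)\sqrt{n/(2\pi\ell)} = O(n^{3/2}/\sqrt{\ell})$. Since we are in the regime $\ell = \omega(n^2)$, this gives $\Ex[\abs{\V}] = o(n^{3/2}/n) = o(\sqrt{n})$. Then Markov's inequality yields $\Prob[\abs{\V} > \epsilon\sqrt{n}] \leq \Ex[\abs{\V}]/(\epsilon\sqrt{n}) \to 0$ for any fixed $\epsilon > 0$.

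The one subtlety is that Lemma \ref{lem:prob} is an asymptotic statement ($\sim$) rather than a uniform upper bound with an explicit constant, so I would first extract from it the cruder fact that there is a constant $c$ (and an $n_0$) with $\Prob[i \in \V] \leq c\sqrt{n/\ell}$ for all $i$ and all $n \geq n_0$; this is immediate from the definition of $\sim$ together with the uniformity already asserted in the lemma. With this in hand the computation above goes through verbatim: $\Ex[\abs{\V}] \leq c n \sqrt{n/\ell} = c n^{3/2}/\sqrt{\ell}$, and since $\ell = \omega(n^2)$ we have $n^{3/2}/\sqrt{\ell} = o(\sqrt{n})$, whence $\Prob[\abs{\V} > \epsilon\sqrt{n}] \to 0$.

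I do not expect a serious obstacle here — the statement is essentially a first-moment bound. The only thing to be careful about is that the paper will later need conditions (3) and (4) to hold a.a.s., and those are what genuinely require control of $\abs{\V}$: once $\abs{\V} = o(\sqrt{n})$ a.a.s., one bounds $\Prob[\V \text{ has adjacent elements} \mid \abs{\V} = m]$ by $O(m^2/n)$-type arguments, and similarly for the event $\min\V = n - \max\V$. So the present lemma is best viewed as the engine for those deductions, and its proof should be kept short.
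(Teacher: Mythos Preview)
Your argument is correct. Both you and the paper compute $\Ex[\abs{\V}]$ via Lemma~\ref{lem:prob} and then apply a tail inequality, but you use Markov's inequality on the first moment alone, whereas the paper also bounds $\Var[\abs{\V}] \ll n^3/\ell$ (using the $d=2$ case of Lemma~\ref{lem:prob}) and invokes Chebyshev. In the regime $\ell = \omega(n^2)$ the extra second-moment work buys nothing: since $\Ex[\abs{\V}] \ll n^{3/2}/\sqrt{\ell} = o(\sqrt{n})$, Markov already gives $\Prob[\abs{\V} > \epsilon\sqrt{n}] \leq \Ex[\abs{\V}]/(\epsilon\sqrt{n}) \to 0$, exactly as you wrote. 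So your route is strictly simpler. Your remark about extracting a uniform constant from the asymptotic in Lemma~\ref{lem:prob} is the right level of care, and your closing paragraph correctly anticipates how the paper uses the lemma downstream for conditions~(3) and~(4).
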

\begin{proof}
Define random variables
\begin{align*}
X_i &= \begin{cases}
1 & V(i,i+1) = 0
\\
0 & V(i,i+1) \neq 0
\end{cases}
\end{align*}
So $\abs{\V} = \sum_i X_i$. From 
Lemma \ref{lem:prob} we have $\Ex [ X_i ] \ll \sqrt{n/\ell}$ and $\Ex [ X_i X_j] \ll n/\ell$ for $1 \leq i < j < n$.
Hence $\Ex [ \abs{\V} ] \ll \sqrt{n^3/\ell}$ and $\Var [ \abs{\V} ] \ll n^3/\ell$.
By Chebyshev's inequality
\begin{align*}
    \Prob [ \abs{\V} \geq \epsilon \sqrt{n} ] 
    &\leq
    \Prob \left[ \abs{\V} - \sqrt{n^3/\ell} \geq \sqrt{n}(\epsilon - \sqrt{n^2/\ell}) \right]
    \\
    &\leq 
    \frac{1}{(\epsilon-\sqrt{n^2/\ell})^2(\ell/n^2)} 
    \to 0
\end{align*}
\end{proof}
Observe that the distribution of $\V$ is invariant under permutation. In other words, for a fixed set $\mathcal{S} \subset \{1, \dots, n-1 \}$ and a permutation $\pi$ on $\{1, \dots, n-1 \}$ we have
\begin{align*}
    \Prob [ \V = \mathcal{S}] = \Prob [ \V = \pi \mathcal{S}]
\end{align*}
and hence
\begin{align*}
    \Prob [ \V = \mathcal{S}] = \frac{1}{{n-1 \choose \abs{\mathcal{S}}}}\Prob [\abs{V} = \abs{\mathcal{S}}]
\end{align*}
Let $A(k)$ be the number of sets $\mathcal{S} \subset \{1, \dots, n-1 \}$ of size $k$ with at least one pair of adjacent elements. We have
\[
A(k) \leq (n-2) {n-3 \choose k - 2}
\]
Let $B(k)$ be the number of sets $\mathcal{S}$ for which $\min \mathcal{S} = n - \max \mathcal{S}$. Summing over the possible values of $\min \mathcal{S}$ we have 
\[
B(k) \leq \sum_{1 \leq a \leq n/2} {n - 1 - 2a \choose k - 2} 
\]
One easily checks
\[
\frac{A(k) + B(k)}{{n-1 \choose k}} \leq \frac{2k^2}{n}
\]
For $k \leq \epsilon \sqrt{n}$ this is $\leq 2 \epsilon^2$. On the other hand $\Prob [ \abs{V} > \epsilon \sqrt{n} ] \to 0$, so we are done. 

\textit{Acknowledgements.}
 We thank Tullia Dymarz for suggesting this problem and for many helpful discussions.








\begin{thebibliography}{0}

\bibitem{randnilp0}
{\scshape Delp, Kelly ; Dymarz, Tullia; Schaffer-Cohen, Anschel.} A matrix model for random nilpotent groups. \textit{ International Mathematics Research Notices.} \textbf{1} (2019) 201–230. https://doi.org/10.1093/imrn/rnx128

\bibitem{randnilp1}
 {\scshape Cordes, Matthew, Duchin, Moon, Duong, Yen, and S\'anchez, Andrew P. }
Random nilpotent groups 1.
\textit{To appear in International Mathematics Research Notices}
  (2016).

\bibitem{Hall}
 {\scshape Hall, Philip.}
  The {E}dmonton notes on nilpotent groups. \textit{Queen Mary College Mathematics Notes, Mathematics Department, Queen Mary College,
  London.} (1969)

\bibitem{Ollivier}
 {\scshape Ollivier, Yann.}
Invitation to random groups.
 \textit{Ensaios
  Matem\'aticos [Mathematical Surveys], Sociedade Brasileira de
  Matem\'atica, Rio de Janeiro.} \textbf{10} (2005).

\bibitem{Baumslag}
{\scshape Baumslag, Gilbert.} Lectures on nilpotent groups. \textit{Regional Conference Series in Mathematics American Mathematical Society, Providence, R.I.} \textbf{2} (1971).
\end{thebibliography}
\end{document}